\theoremstyle{plain}
\newtheorem{thm}{Theorem}
\newtheorem{que}{Question}
\theoremstyle{remark}
\newtheorem{rem}{Remark}
\theoremstyle{definition}
\newtheorem{exm}{Example}
\newcommand{\ud}{\mathrm{d}}
\newcommand{\eqdefl}{\mathrel{\mathop:}=}
\newcommand{\N}{\mathbb{N}}
\newcommand{\R}{\mathbb{R}}
\newcommand{\myfigure}[2]{ \includegraphics*[#1]{#2} }
\newcommand{\xleft}{x_{\rm L}}
\begin{document}
\title[Positivity of Riemann--Stieltjes integrals]{On the positivity of Riemann--Stieltjes integrals}
\author[J. Lukkarinen]{Jani Lukkarinen} \address{Jani Lukkarinen, Department of Mathematics and Statistics, University of Helsinki, P.O. Box 68, FI-00014 Helsingin yliopisto, Finland.}
\email{jani.lukkarinen@helsinki.fi}
\author[M. S. Pakkanen]{Mikko S. Pakkanen} \address{Mikko S. Pakkanen, CREATES and Department of Economics and Business, Aarhus University, Fuglesangs All\'e 4, DK-8210 Aarhus V, Denmark.}
\email{msp@iki.fi}
\urladdr{\url{http://www.mikkopakkanen.fi/}}
\date{\today}

\begin{abstract}
We study the question, whether a Riemann--Stieltjes integral of a positive continuous function with respect to a non-negative function of bounded variation is positive.
\end{abstract}
\keywords{Riemann--Stieltjes integral, positivity, function of bounded variation, Gr\"onwall's inequality}
\subjclass[2000]{Primary 26A42; Secondary 26A45}
\maketitle

\section{Introduction} \label{sec:intro}

Let $f : [a,b] \rightarrow \R$ be a continuous function and $g : [a,b] \rightarrow \R$ a function of bounded variation. It is a classical result that for such $f$ and $g$, and for any $y \in (a,b]$, the Riemann--Stieltjes integral
\begin{equation}\label{integral}
\int_a^y f(x) \ud g(x),
\end{equation}
exists (see,\ e.g., \cite[pp.~316--317]{ProtterMorrey1977}). While the basic properties of Riemann--Stieltjes integrals (and related Lebesgue--Stieltjes integrals) are covered in classical textbooks on real analysis and integration \cite{HewittStromberg1965,Hildebrandt1963,ProtterMorrey1977,rudin-principles}\footnote{Protter and Morrey \cite{ProtterMorrey1977} have a particularly comprehensive account of Riemann--Stieltjes integrals.},  
the following simple question is not addressed in them.  
\begin{que}\label{question}
If $f$ is positive, $g$ is non-negative, non-vanishing, and satisfies $g(a)=0$, can we then select the upper limit of integration $y$ so that the integral \eqref{integral} is positive?  
\end{que}
The answer to the question is obviously yes for Riemann integrals ($g(x)=x-a$), which could help to explain why it has been overlooked so far.
In fact, after an extensive search of the literature, we believe that Question \ref{question} has not been answered before in the full generality. (We shall comment later, in Remark \ref{finalcomment}, why the present generality may be relevant in applications.)
The only reference known to us is a note by Satyanarayana \cite{Satyanarayana1980} where an affirmative answer is proven under a slightly different set of assumptions, most importantly, assuming that $g$ is \emph{non-decreasing}.

Let us stress that \emph{positive} will here always mean \emph{strictly positive}.  In particular, since $f$ is continuous, under our assumptions $0<\min f\leqslant \max f < \infty$.
Recall that, since $g$ is of bounded variation, there exist non-decreasing functions $g^+$ and $g^-$ such that $g=g^+ - g^-$. Therefore, finite $\lim_{x \rightarrow y^+} g(x)$ and $\lim_{x \rightarrow y^-} g(x)$ exist for any $y \in [a,b]$ (apart from $\lim_{x \rightarrow a^-} g(x)$ and $\lim_{x \rightarrow b^+} g(x)$, obviously).  
Moreover, the integral \eqref{integral} is equal to
\begin{equation*}
\int_a^y f(x) \ud g^+(x) - \int_a^y f(x) \ud g^-(x)\, .
\end{equation*}
Since $g(a)=0$, we may assume that $g^+$ and $g^-$ are non-negative and satisfy $g^\pm(a)=0$.

We may now distinguish two special cases, where it is evident that the answer to Question \ref{question} is yes.
\begin{enumerate}
\item If $\lim_{x \rightarrow \xleft^+} g(x)>0$, where $\xleft  \eqdefl \inf\{ x: g(x) > 0\}$, then there exists $\varepsilon>0$ such that \eqref{integral} is positive for $y=\xleft  + \varepsilon$. 
\item If $g^+(y)>0$ and $g^-(y)=0$, then \eqref{integral} is at least $g^+(y)\min_{x \in [a,y]}f(x)>0$.
\end{enumerate}
The first item follows from the elementary lower bound, valid for $0<\varepsilon\le b-\xleft$,
\begin{equation*}
\int_a^{\xleft+\varepsilon} f(x) \ud g(x)\geqslant g^+(\xleft+\varepsilon)\min_{a,\xleft-\varepsilon\le x\le \xleft+\varepsilon} f(x) -g^-(\xleft+\varepsilon)\max_{a,\xleft-\varepsilon\le x\le \xleft+\varepsilon} f(x),
\end{equation*}
whereas the second item is a straightforward consequence of $g^\pm(a)=0$.

In general, the integral \eqref{integral} is positive if and only if
\begin{equation}\label{variationintegrals}
\int_a^y f(x) \ud g^+(x) > \int_a^y f(x) \ud g^-(x).
\end{equation}
Obviously, the condition $g\geqslant0$ is equivalent to $g^+ \geqslant g^-$, and we have $g(x)>0$ if and only if $g^+(x)>g^-(x)$.  It would be tempting to conjecture that, as a continuous function, $f$ is ``nearly constant" in some neighborhood of $\xleft $ and, hence, that the inequality \eqref{variationintegrals} ought to hold for $y=\xleft  + \varepsilon$ with some ``small" $\varepsilon>0$, suggesting an affirmative answer to Question \ref{question} in general. 

\begin{rem}
The proviso $g(a)=0$ may seem superfluous as the value of the integral \eqref{integral} does not depend on $g(a)$. However, together with the condition $g\geqslant 0$ it constrains the behavior of $g$ near $\xleft$, which is a key part of the formulation of Question \ref{question}. It should also be stressed that the continuity of $f$ is equally important---aside from the possible non-existence of the integral, there is no reason to expect the answer to Question \ref{question} to be yes when $f$ fails to be continuous. 
\end{rem}

\section{Negative answer to Question \ref{question}}

Unfortunately, the heuristic above is too simple-minded since mere continuity does not restrict the \emph{fine properties} of the integrand $f$ and leaves it with enough room to vary ``too much" for our purposes. Indeed, the general answer to Question \ref{question} is no. We show that for any $f$ that exhibits ``enough" variation, there exists a suitable $g$ such that the integral \eqref{integral} is less than zero for all $y \in (a,b]$.

\begin{thm}\label{negative}
Let $f: [a,b] \rightarrow (0,\infty)$ be a continuous function. Suppose that there exist two sequences $(\underline{x}_n)$ and $(\overline{x}_n)$ with
\begin{equation*}
a < \cdots < \underline{x}_n < \overline{x}_n < \cdots <\underline{x}_2 < \overline{x}_2 < \underline{x}_1 < \overline{x}_1 \leqslant b \quad\textrm{and}\quad \lim_{n \rightarrow \infty}\overline{x}_n = a  
\end{equation*}
such that for some $\alpha> 0$ and $\gamma \in (0,1)$, 
\begin{equation*}
f(\overline{x}_n) -f(\underline{x}_n) \geqslant \alpha n^{-\gamma} \quad \text{for all $n \in \N$.}
\end{equation*}
Then, there exists a function $g : [a,b] \rightarrow [0,\infty)$ of bounded variation such that $g(a)=0$ and
\begin{equation*}
\int_a^y f(x) \ud g(x) < 0 \quad \text{for all $y \in (a,b]$}\, .
\end{equation*}
\end{thm}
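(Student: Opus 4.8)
The plan is to build $g$ explicitly as a countable superposition of elementary ``bumps'' supported on the intervals $[\underline{x}_n,\overline{x}_n)$. For constants $\delta_n>0$ to be fixed at the end, put
\begin{equation*}
g_n\eqdefl\delta_n\,\mathbf{1}_{[\underline{x}_n,\overline{x}_n)}\qquad\text{and}\qquad g\eqdefl\sum_{n\geqslant1}g_n.
\end{equation*}
Since the intervals $[\underline{x}_n,\overline{x}_n)$ are pairwise disjoint, at each point at most one summand is nonzero, so $0\leqslant g\leqslant\sup_n\delta_n$ and $g(a)=0$ (because $a<\underline{x}_n$ for every $n$). Each $g_n$ has total variation $2\delta_n$, so if $\sum_n\delta_n<\infty$ then $g$ is of bounded variation with $V(g;[a,b])=2\sum_n\delta_n$, and $\int_a^y f\,\ud g$ exists for every $y\in(a,b]$ by the classical result recalled in Section~\ref{sec:intro}.

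The mechanism behind the construction is that, \emph{once a bump has been integrated over completely} (i.e.\ for $y\geqslant\overline{x}_n$), it contributes a \emph{negative} amount, since $f$ is larger at the right end $\overline{x}_n$ than at the left end $\underline{x}_n$. Computing directly from the Riemann--Stieltjes sums, using the continuity of $f$ only to evaluate the limit at the two jump points, one gets
\begin{equation*}
\int_a^y f(x)\,\ud g_n(x)=
\begin{cases}
0, & y<\underline{x}_n,\\
\delta_n f(\underline{x}_n), & \underline{x}_n\leqslant y<\overline{x}_n,\\
-\delta_n\bigl(f(\overline{x}_n)-f(\underline{x}_n)\bigr)\leqslant-\alpha\,\delta_n n^{-\gamma}, & y\geqslant\overline{x}_n.
\end{cases}
\end{equation*}
Applying the elementary estimate $\bigl|\int_a^y f\,\ud h\bigr|\leqslant(\max f)\,V(h;[a,b])$ to the tails $h=\sum_{n>N}g_n$, whose total variation $2\sum_{n>N}\delta_n$ tends to $0$, the partial sums converge and $\int_a^y f\,\ud g=\sum_{n\geqslant1}\int_a^y f\,\ud g_n$ uniformly in $y$. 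Now fix $y\in(a,b]$. Because $\overline{x}_n\to a$ the set $\{n:\overline{x}_n\leqslant y\}$ is nonempty, and by disjointness there is at most one index $k=k(y)$ with $\underline{x}_k\leqslant y<\overline{x}_k$, while every $n$ with $\overline{x}_n>y$ and $n\neq k(y)$ satisfies $y<\underline{x}_n$ and every $n>k(y)$ satisfies $\overline{x}_n\leqslant y$. Splitting the series into these three types of terms gives $\int_a^y f\,\ud g\leqslant\delta_{k(y)}\max f-\alpha\sum_{n>k(y)}\delta_n n^{-\gamma}$ when $k(y)$ exists, and $\int_a^y f\,\ud g\leqslant-\alpha\sum_{n:\,\overline{x}_n\leqslant y}\delta_n n^{-\gamma}<0$ otherwise.

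It thus remains to choose $\delta_n>0$ with $\sum_n\delta_n<\infty$ such that
\begin{equation*}
\delta_k\,\max f<\alpha\sum_{n>k}\delta_n n^{-\gamma}\qquad\text{for every }k\in\N .
\end{equation*}
This single inequality is the crux, and the only delicate point: the lone positive term is of order $\delta_k$, whereas for a summable sequence the tail $\sum_{n>k}\delta_n n^{-\gamma}$ is typically no larger than a constant multiple of $\delta_k$ as well, so separating the two forces $\delta_n$ to decay slowly, while summability forbids it from decaying too slowly. A convenient choice is $\delta_n\eqdefl(n+p)^{-2}$ with an integer $p$ to be picked large: then $\sum_n\delta_n<\infty$ for any $p$, while comparison with an integral yields $\sum_{n>k}\delta_n n^{-\gamma}\geqslant\sum_{n>k}(n+p)^{-2-\gamma}\geqslant c_\gamma(k+p)^{-1-\gamma}$ with $c_\gamma>0$ depending only on $\gamma$, so the required inequality reduces to $\alpha c_\gamma(k+p)^{1-\gamma}>\max f$. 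Since $\gamma<1$, the left side is increasing in $k$ and tends to $\infty$ with $p$, so it holds for all $k\in\N$ once $p$ is large enough in terms of $\alpha$, $\gamma$ and $\max f$; this is exactly where the hypotheses $\gamma\in(0,1)$ and the quantitative gap $f(\overline{x}_n)-f(\underline{x}_n)\geqslant\alpha n^{-\gamma}$ enter. With this $g$, the two displayed bounds give $\int_a^y f\,\ud g<0$ for every $y\in(a,b]$, as required.
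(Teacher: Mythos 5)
Your construction is essentially the same as the paper's: $g$ is a superposition of indicator bumps $\delta_n\chi_{[\underline{x}_n,\overline{x}_n)}$ with summable power-law weights, the completed bumps each contribute $-\delta_n\bigl(f(\overline{x}_n)-f(\underline{x}_n)\bigr)\leqslant-\alpha\delta_n n^{-\gamma}$, and the hypothesis $\gamma<1$ makes this tail dominate the single positive term $\delta_k f(\underline{x}_k)$. The only (cosmetic) difference is that the paper takes $\delta_n=n^{-\beta}$ and truncates away the finitely many indices where the inequality could fail, whereas you shift the index by a large $p$ so that it holds for every $k$; both arguments are correct.
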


\begin{proof}
Let $\beta > 1$ and consider $h : [a,b] \rightarrow [0,\infty)$ defined by
\begin{equation*}
h(x) \eqdefl \sum_{n \in \N} n^{-\beta} \chi_{[\underline{x}_n,\overline{x}_n)}(x),
\end{equation*}
where $\chi_E$ denotes the characteristic function of a set $E$. (Figure \ref{fig:unbf} illustrates the definition for Example~\ref{th:negexample} below.) 
This is, by construction, a function of bounded variation such that $h(a)=0$.
We have for any $n \in \N$,
\begin{equation*}
\int_a^{\underline{x}_n}f(x) \ud h(x) = n^{-\beta}f(\underline{x}_n) - \sum_{k = n + 1}^\infty k^{-\beta} \big(f(\overline{x}_k)-f(\underline{x}_k)\big),
\end{equation*}
where
\begin{equation*}
\begin{split}
\sum_{k = n + 1}^\infty k^{-\beta} \big(f(\overline{x}_k)-f(\underline{x}_k)\big) & \geqslant \alpha \sum_{k = n + 1}^\infty k^{-(\beta+\gamma)} \\ & \geqslant \alpha \int_{n+2}^\infty x^{-(\beta+\gamma)} \ud x \\ & = \frac{\alpha}{\beta+\gamma-1} (n+2)^{-(\beta+\gamma-1)}.
\end{split}
\end{equation*}
Since $\gamma < 1$ and $\sup_{n \in \N} f(\underline{x}_n) < \infty$, there exists $n_0 \in \N$ such that
\begin{equation*}
\int_a^{\underline{x}_n}f(x) \ud h(x) < 0 \quad \text{for all $n \geqslant n_0$.}
\end{equation*}
We may also note that for all $n \geqslant n_0$, with $n\geqslant 2$, and $y \in (\underline{x}_{n},\underline{x}_{n-1})$,
\begin{equation*}
\int_a^y f(x) \ud h(x)  \leqslant \int_a^{\underline{x}_{n}}f(x) \ud h(x)< 0.
\end{equation*}
Thus, defining $g \eqdefl h \chi_{[a,\overline{x}_{n_0})}$ yields a function with all the properties stated in the theorem.
\end{proof}

\begin{figure}
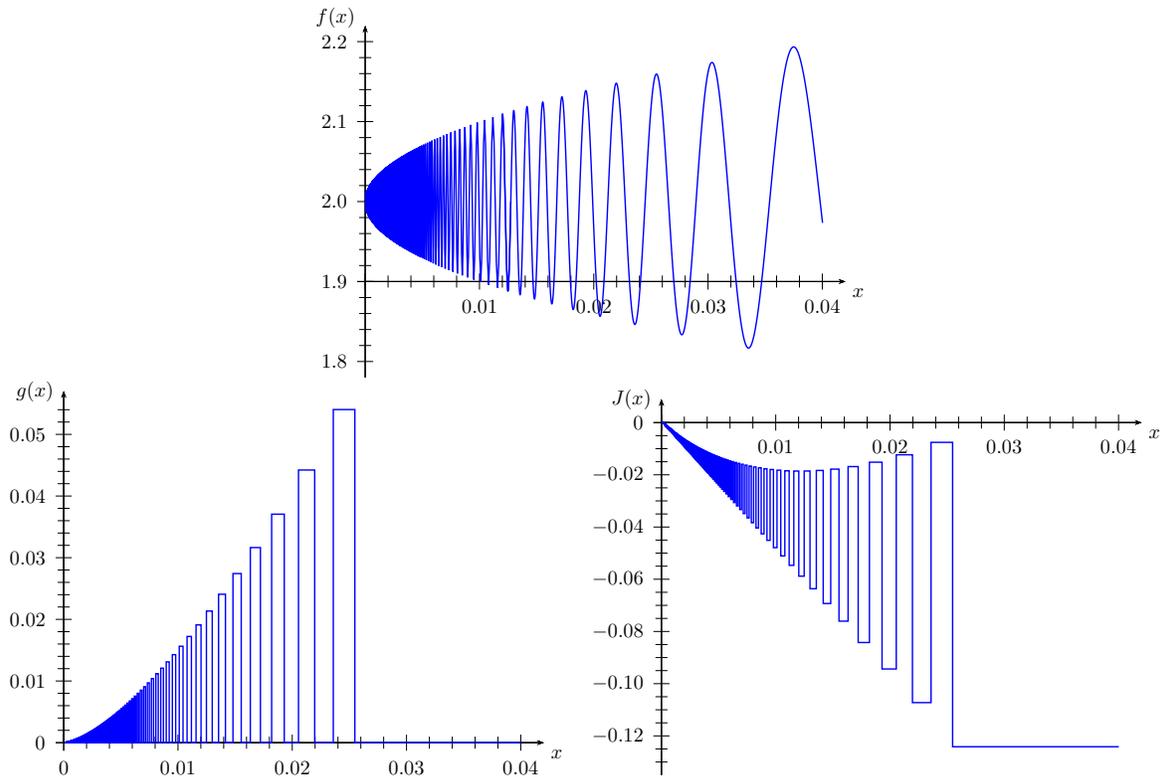

  \myfigure{scale=0.76}{RS-example1-v2} \\
  \myfigure{scale=0.76}{RS-example2-v2}
  \myfigure{scale=0.76}{RS-example3-v2}
\caption{Plot of $f$, as defined in Example \ref{th:negexample} for $\gamma=\frac{1}{2}$, and the corresponding functions $g$ and $J$, $J(x) \eqdefl \int_0^x f(x') \ud g(x')$, as defined in the proof of Theorem \ref{negative} using $\beta=\frac{3}{2}$ and $n_0=7$. The values have been computed numerically by considering only contributions with $n\leqslant 1\,000$. For clarity, only the region with $x\in [0,0.04]$ is shown. \label{fig:unbf}}
\end{figure}

\begin{exm}\label{th:negexample}
Function $f : [0,1] \rightarrow (0,\infty)$ given by
\begin{equation*}
f(x) \eqdefl \begin{cases}
x^\gamma \sin(1/x)+2, & \text{if $x > 0$},\\
2 ,& \text{if $x = 0$}, 
\end{cases}
\end{equation*}
where $\gamma \in (0,1)$, satisfies the condition of Theorem \ref{negative} with $\overline{x}_n=\frac{1}{4 n-3} \frac{2}{\pi}$,
$\underline{x}_n=\frac{1}{4 n-1} \frac{2}{\pi}$, $n\in \N$, using $\alpha = 2 (2 \pi)^{-\gamma}$.  Figure \ref{fig:unbf} illustrates the behavior of $f$ and of the corresponding $g$ and Riemann--Stieltjes integral, as defined in the proof of Theorem \ref{negative}. 
\end{exm}

\section{Integrands of bounded variation}

Any integrand $f$ that satisfies the condition of Theorem \ref{negative} is clearly of \emph{unbounded variation}. This prompts us to ask, could we actually obtain an affirmative answer to Question \ref{question} if $f$ varied ``less." In fact, we are able to show that, if $f$ is of \emph{bounded variation}, the answer to Question \ref{question} is yes. Bounds for Riemann--Stieltjes integrals under these assumptions have been derived by Beesack \cite{Beesak75}, Ganelius \cite{Ganelius56}, and Knowles \cite{Knowles84}, but instead of building our argument on them, we give a direct proof which relies on some elementary measure theory and the measure-theoretic version of \emph{Gr\"onwall's inequality}.

\begin{thm}\label{bvcase}
Let $f : [a,b] \rightarrow (0,\infty)$ be a continuous function and $g : [a,b] \rightarrow [0,\infty)$ a non-vanishing function of bounded variation such that $g(a)=0$. If $f$ is of bounded variation, then
\begin{equation*}
\int_a^y f(x) \ud g(x) > 0 \quad \text{for some $y \in (a,b]$.}
\end{equation*}
\end{thm}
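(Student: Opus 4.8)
The plan is to argue by contradiction: assume $\int_a^y f(x)\,\ud g(x) \leqslant 0$ for \emph{every} $y \in (a,b]$, and show that $g$ must then vanish identically, contradicting the hypothesis that $g$ is non-vanishing. The two tools are integration by parts for Riemann--Stieltjes integrals and the measure-theoretic Gr\"onwall inequality, and the structure of the argument makes the role of the hypotheses transparent: bounded variation of $f$ is what makes the relevant measure finite, and \emph{continuity} of $f$ is what makes that measure non-atomic, which is precisely what Gr\"onwall needs in order to force the trivial conclusion. (This also pinpoints why Theorem \ref{negative} does not contradict the present result.)

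First I would set up the elementary preliminaries. Put $m \eqdefl \min_{[a,b]} f > 0$, and let $V : [a,b] \to [0,\infty)$ be the total variation function of $f$, so that $V$ is non-decreasing, $V(a) = 0$, $V(b) < \infty$, and --- crucially --- $V$ is continuous because $f$ is. Integration by parts for Riemann--Stieltjes integrals is legitimate here (since $f$ is continuous and $g$ is of bounded variation, $\int_a^y f\,\ud g$ exists, hence so does $\int_a^y g\,\ud f$ and the formula holds), and together with $g(a) = 0$ it yields, for every $y \in (a,b]$,
\begin{equation*}
0 \geqslant \int_a^y f(x)\,\ud g(x) = f(y)g(y) - \int_a^y g(x)\,\ud f(x).
\end{equation*}
Since $g \geqslant 0$, a routine estimate via the Jordan decomposition of $f$ gives $\int_a^y g\,\ud f \leqslant \int_a^y g\,\ud V = \int_{(a,y]} g\,\ud\nu$, where $\nu$ is the finite non-negative Borel measure with $\nu((a,x]) = V(x)$; note that $\nu$ is non-atomic because $V$ is continuous. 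Combining this with $f \geqslant m$ produces the key estimate
\begin{equation*}
g(y) \leqslant \frac{1}{m}\int_{(a,y]} g(x)\,\ud\nu(x) \quad \text{for all } y \in [a,b]
\end{equation*}
(trivially so at $y = a$).

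The final step is to invoke Gr\"onwall's inequality in the following measure-theoretic form: if $u : [a,b] \to [0,\infty)$ is bounded and Borel measurable, $\nu$ is a finite non-negative \emph{non-atomic} Borel measure, $\lambda \geqslant 0$, and $u(y) \leqslant \lambda \int_{(a,y]} u\,\ud\nu$ for all $y$, then $u \equiv 0$. For completeness I would include the short proof: iterating the hypothesis $n$ times gives $u(y) \leqslant \lambda^n \int_{\{a < x_n < \cdots < x_1 \leqslant y\}} u(x_n)\,\ud\nu(x_n) \cdots \ud\nu(x_1) \leqslant \|u\|_\infty\,\lambda^n\, V(y)^n / n!$, where the evaluation of the $\nu^{\otimes n}$-measure of the ordered simplex as $V(y)^n/n!$ is exactly where non-atomicity of $\nu$ (nullity of the diagonals) is used; letting $n \to \infty$ forces $u(y) = 0$. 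Applying this with $u = g$ (bounded since $g$ is of bounded variation), $\lambda = 1/m$, and $\nu$ as above yields $g \equiv 0$, the desired contradiction.

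I expect the only genuinely delicate point to be the non-atomicity input and the accompanying identification $\int_a^y g\,\ud V = \int_{(a,y]} g\,\ud\nu$ of the Riemann--Stieltjes integral with a Lebesgue--Stieltjes integral: this is the one place where continuity of $f$ is indispensable beyond merely guaranteeing existence of the integral, and hence the heart of the matter. Integration by parts, the Jordan-decomposition bookkeeping, and the Gr\"onwall iteration itself are all routine.
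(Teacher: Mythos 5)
Your proposal is correct and follows essentially the same route as the paper: contradiction via integration by parts plus the measure-theoretic Gr\"onwall inequality driven by the (non-atomic, by continuity of $f$) total-variation measure. The only cosmetic difference is that the paper applies Gr\"onwall to $u=fg$ with the measure $f(x)^{-1}\nu(\ud x)$, whereas you apply it to $u=g$ with $\nu/\min f$; these are interchangeable since $f$ is bounded away from $0$.
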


\begin{proof}
Let us denote by $V_c^df$ the total variation of $f$ on $[c,d]\subset [a,b]$. 
Recall that total variation is additive in the sense that $V_a^d f = V_a^c f + V_c^d f$ for $a \leqslant c \leqslant d$ \cite[Theorem 12.1]{ProtterMorrey1977}. Moreover,
since $f$ is continuous, the mapping $x \mapsto V_a^x f$ from $[a,b]$ to $[0,\infty)$ is continuous \cite[Theorem 12.2]{ProtterMorrey1977}. Thus, there exists a finite, positive Borel measure $\nu$ on $[a,b]$ such that $\nu([c,d))=V_c^d f$ for any $c$ and $d$ such that $a\leqslant c< d\leqslant b$.
Let us define another finite, positive Borel measure by $\mu(\ud x) \eqdefl f(x)^{-1} \nu(\ud x)$.
By construction, we have then
\begin{equation*}
|f(d)-f(c)| \leqslant \int_{[c,d)} f(x) \mu(\ud x)\, .
\end{equation*}
By an approximation with suitable Riemann--Stieltjes sums, where the values of $g$ are chosen to be sufficiently close to their respective infima on all subintervals of the partitions, we can prove that
 \begin{equation} \label{eq:gdfbound}
\Bigl| \int_a^y g(x) \ud f(x) \Bigr| \leqslant \int_{[a,y)} f(x)g(x) \mu(\ud x)\, .
\end{equation}
The Riemann--Stieltjes integral on the left hand side is well-defined by the 
integration by parts formula \cite[Theorem 12.14]{ProtterMorrey1977}
\begin{equation} \label{eq:ipformula}
\int_a^y g(x) \ud f(x) = f(y)g(y)-f(a)g(a) - \int_a^y f(x) \ud g(x)\, ,
\end{equation}
whenever $\int_a^y f(x) \ud g(x)$ exists and this is always true under the present assumptions.

Now suppose that, contrary to our assertion, we have
\begin{equation}\label{antithesis}
\int_a^y f(x) \ud g(x)\leqslant 0 \quad \text{for all $y \in (a,b]$.}
\end{equation}
Rearranging the integration by parts formula \eqref{eq:ipformula} and using the assumption $g(a)=0$, \eqref{eq:gdfbound},  and \eqref{antithesis}, we obtain for any  $y \in (a,b]$
\begin{equation*}
f(y)g(y) \leqslant \int_a^y g(x) \ud f(x) \leqslant \int_{[a,y)} f(x)g(x) \mu(\ud x)\, .
\end{equation*}
But the measure-theoretic version of Gr\"onwall's inequality \cite[Theorem A.5.1]{EthierKurtz1986} implies that then $f(x)g(x) \leqslant 0$ for all $x \in [a,b]$, whence $g=0$, a contradiction. 
\end{proof}

\begin{rem}
There are two straightforward refinements to Theorem \ref{bvcase}. Firstly, it clearly suffices that $f$ is of bounded variation on $[\xleft ,\xleft +\varepsilon]$ for some $\varepsilon>0$. Secondly, if $g$ is right-continuous, then also the mapping
\begin{equation*}
y \mapsto \int_a^y f(x) \ud g(x)
\end{equation*}
is right-continuous and, under the assumptions of Theorem \ref{bvcase}, there exists an interval $[c,d] \subset [a,b]$ such that
\begin{equation*}
\int_a^y f(x) \ud g(x) > 0 \quad \text{for all $y \in [c,d]$.}
\end{equation*} 
\end{rem}

\begin{rem}\label{finalcomment}
Theorem \ref{bvcase} explains why the answer to Question \ref{question} is perhaps elusive. Experimentation with nicely behaving integrands will not suffice, since a ``pathological" $f$ is required in order to discover the general answer. However, such integrands need not be mere curiosities. In fact, our study of Question \ref{question} was originally motivated by an application in financial mathematics involving as the integrand a \emph{path} of a continuous-time \emph{stochastic process}, which is typically of unbounded variation. 
\end{rem}

\section*{Acknowledgements}

This version corrects a mistake in the formulation of Theorem \ref{negative}, pointed out to us by Gerald Teschl. 
J.~Lukkarinen was supported by the Academy of Finland. M.~S.~Pakkanen acknowledges support from the Finnish Cultural Foundation, from CREATES, funded by the Danish National Research Foundation, and from the Aarhus University Research Foundation regarding the  project ``Stochastic and Econometric Analysis of Commodity Markets".


\begin{thebibliography}{1}

\bibitem{Beesak75}
{P.~R. Beesack},
Bounds for {R}iemann--{S}tieltjes integrals,
{\it Rocky Mountain J. Math.}~{\bf 5} (1975) 75--78. Available at \url{http://dx.doi.org/10.1216/RMJ-1975-5-1-75}.



\bibitem{EthierKurtz1986}
{S.~N. Ethier and T.~G. Kurtz}, {\it Markov Processes:\ Characterization and
  Convergence}, Wiley, New York, 1986.

\bibitem{Ganelius56}
{T.~Ganelius}, Un th\'eor\`eme taub\'erien pour la transformation de Laplace, {\it C. R. Acad. Sci. Paris}~{\bf 242} (1956) 719--721.
  
\bibitem{HewittStromberg1965}
{E.~Hewitt and K.~Stromberg}, {\it Real and Abstract Analysis}, Springer,
  New York, 1965.

\bibitem{Hildebrandt1963}
{T.~H.~Hildebrandt}, {\it Introduction to the Theory of Integration}, Academic Press, New York, 1963.

\bibitem{Knowles84}
{I.~Knowles}, Integral mean value theorems and the Ganelius inequality, {\it Proc. Roy. Soc. Edinburgh Sect. A}~{\bf 97} (1984) 145--150. Available at \url{http://dx.doi.org/10.1017/S0308210500031917}.

\bibitem{ProtterMorrey1977}
{M.~H. Protter and C.~B. Morrey}, {\it A First Course in Real Analysis}, Springer, New York, 1977. 

\bibitem{rudin-principles}
W.~Rudin,
{\it Principles of Mathematical Analysis\/}, McGraw-Hill, New York, 1953.





\bibitem{Satyanarayana1980}
{U.~V. Satyanarayana}, A note on Riemann--Stieltjes integrals, {\it Amer. Math. Monthly}~{\bf 87} (1980)
477--478.  Available at \url{http://www.jstor.org/stable/2320259}.

\end{thebibliography}
\end{document}